\newtheorem{theorem}{Theorem} % [section]
\newtheorem{corollary}[theorem]{Corollary}
\newtheorem{lemma}[theorem]{Lemma}
\newtheorem*{rem}{Remarks}
\newtheorem*{exam}{Examples}
\def\Z{\mathbb{Z}}
\def\C{\mathbb{C}}
\title{Counting Group Valued Graph Colorings}
\date{4 April, 2012}
\begin{document}
\begin{abstract}
There are many variations on partition functions for graph homomorphisms or colorings.  The case considered here is a counting or hard constraint problem in which the range or color graph carries a free and vertex transitive Abelian group action so that the colors are identified with the elements of this group.  
A Fourier transform is used to obtain an expansion for the numbers of colorings with terms indexed by isthmus free subgraphs of the domain.  The terms are products of a polynomial in the edge density $\alpha$ of the color graph and the number of colorings of the indexing subgraph of the domain into the complementary color graph.  The polynomial in $\alpha$ is independent of the color group and the term has order $(1-\alpha)^r$ where $r$ is the number of vertices minus the number of components in the indexing subgraph.  
Thus if $1-\alpha $ is small there is a main term indexed by the empty subgraph which is a polynomial in $\alpha $ and the first dependence on the coloring group occurs in the lowest order corrections which are indexed by the shortest cycles in the graph and are of order $(1-\alpha)^{g-1}$ where $g$ is the length of these shortest cycles.  The main theorem is stated as a reciprocity law.  Examples are given in which the coloring groups are long cycles and products of short cycles and adjacent vertices are required to have distant rather than distinct colors.
The chromatic polynomial of a graph corresponds to using any group and taking the allowed set to be the complement of the identity.  
\end{abstract}

\maketitle

\section{Introduction}
There are many variations on partition functions for graph homomorphisms or colorings [1, 2].  The case considered here is a counting or hard constraint problem in which the range or color graph carries a free and vertex transitive Abelian group action so that the colors are identified with the elements of this group.  

Fix a finite set $V$, a finite Abelian group ${F}$ and a subset  ${A}=-{A}\subseteq {F}$. 
The set $V$ will be the vertex set for the graphs; the elements of the group $F$ will be the colors applied to these vertices; and the subset $A$ will be the allowed differences between the two colors used for the ends of an edge.  Since $A$ is symmetric edge orientations can be ignored.  
Write $f=|{F}|$, $v=|V|$, ${\alpha}={|{A}|\over f}$, $\overline{A}={F}-{A}$ and $\overline{\alpha} =1-{\alpha}$.  
Write $P_V$ for the partially ordered set of isthmus-free simple graphs with vertex set $V$.  
Thus $$P_V=(\{E\subseteq {V\choose 2}|c(E)=c(E-\{t\})\hbox{ for every }t\in E\},\subseteq)$$ 
where $c(E)$ is the number of connected components of the graph with edge set $E$ and vertex set $V$.  
Write $\delta=\delta_E:{F}^V\rightarrow {F}^E$ for the coboundary map for this graph.  
If $P$ is a finite set, write $\C^P$ for the $\C$-vector space with basis indexed by $P$ and with coordinates $[\cdot]_p:\C^P\rightarrow \C$ and $GL(\C^P)$ for the general linear group with matrix entries $[\cdot]^p_q:GL(\C^P)\rightarrow \C$. 
The focus of this paper is on the probability that a uniformly chosen coloring of the vertices of a graph by elements of the group ${F}$ has all differences along edges in the set ${A}$.  This is summarized in the vector $\Gamma^{{A}}\in {\C}^{P_V}$ with coordinates $$[\Gamma^{{A}}]_E=f^{-v}|\delta^{-1}{A}^{E}|.$$
This can also be viewed in terms of edge colorings since also $$[\Gamma^{A}]_E=f^{c(E)-v}|{A}^{E}\cap \hbox{Im}(\delta)|.$$ 

This vector will be expanded using the linear operators $j$ and $r^e$ in $GL({\C}^{P_V})$. The former is associated to the partial order and has entries $[j]^E_H=1$ if $E\subseteq H$ and $[j]^E_H=0$ otherwise.  The latter is diagonal, associated to the linear extension of $P_V$ given by counting edges and has entries $[r^e]^E_E=r^{|E|}$, where $r$ is any complex number. 
Write $J_r=r^ej(r^{-1})^e$ and $M_r=J_{1-r}(-1)^eJ_r^{-1}$.  These are matrices of polynomials in $r$.  

The main point is a reciprocity formula which can then be phrased as a formula for the probability of an allowed coloring in terms of the probability of totally disallowed ones for subgraphs or as a polynomial approximation independent of the coloring group.    
\vskip7pt
\noindent{\bf Theorem.}
$J_{{\alpha}}^{-1}\Gamma^{{A}} = (-1)^eJ_{\overline{\alpha} }^{-1}\Gamma^{\overline{A}}$.  
\vskip7pt
\begin{corollary}
$\Gamma^{{A}}=M_{\overline{\alpha} }\Gamma^{\overline{A}}$. 
\end{corollary}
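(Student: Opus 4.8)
The plan is to derive the corollary directly from the Theorem by a single algebraic manipulation, without revisiting the combinatorial definition of $\Gamma^{A}$ at all. The Theorem already carries all the content; the corollary is merely a rearrangement that absorbs the inverse operator appearing on its left-hand side into the definition of $M_r$. So the whole task is to recognize that the right-hand side of the Theorem, after one left-multiplication, is precisely $M_{\overline{\alpha}}\Gamma^{\overline{A}}$.

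First I would specialize the definition $M_r=J_{1-r}(-1)^eJ_r^{-1}$ at $r=\overline{\alpha}$. Since $\overline{\alpha}=1-\alpha$, we have $1-\overline{\alpha}=\alpha$, and hence $M_{\overline{\alpha}}=J_{\alpha}(-1)^eJ_{\overline{\alpha}}^{-1}$. Next I would take the statement of the Theorem, namely $J_{\alpha}^{-1}\Gamma^{A}=(-1)^eJ_{\overline{\alpha}}^{-1}\Gamma^{\overline{A}}$, and left-multiply both sides by the operator $J_{\alpha}$. On the left this collapses through $J_{\alpha}J_{\alpha}^{-1}=\mathrm{Id}$ to $\Gamma^{A}$; on the right it produces $J_{\alpha}(-1)^eJ_{\overline{\alpha}}^{-1}\Gamma^{\overline{A}}$, which by the previous sentence is exactly $M_{\overline{\alpha}}\Gamma^{\overline{A}}$. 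Chaining these two equalities yields $\Gamma^{A}=M_{\overline{\alpha}}\Gamma^{\overline{A}}$, which is the assertion.

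Honestly there is no real obstacle here, since the corollary is a formal consequence of the Theorem; the two points worth a quick check are both minor. The first is that the argument uses invertibility of $J_{\alpha}$, but this is already implicit in the Theorem, where $J_{\alpha}^{-1}$ appears: concretely $j$ is unitriangular with respect to the edge-count grading and hence invertible, and conjugating by the diagonal $r^e$ preserves invertibility. The second is that the derivation only ever left-multiplies a given identity by a single fixed operator, so no commutation among $(-1)^e$, $J_{\alpha}$, and $J_{\overline{\alpha}}^{-1}$ is needed and the noncommutativity of these operators never intervenes. The one bookkeeping step where a complementation error could slip in is the identity $1-\overline{\alpha}=\alpha$, so I would make that substitution explicit before writing the final line.
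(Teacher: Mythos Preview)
Your derivation is correct and matches the paper's treatment: the paper simply declares Corollary~1 an immediate consequence of the Theorem, and your left-multiplication by $J_\alpha$ together with the specialization $M_{\overline{\alpha}}=J_{1-\overline{\alpha}}(-1)^eJ_{\overline{\alpha}}^{-1}=J_{\alpha}(-1)^eJ_{\overline{\alpha}}^{-1}$ is exactly the intended one-line unpacking.
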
 
\vskip5pt\noindent Write $g(E)$ for the girth or length of the shortest cycle of $E$.
\begin{corollary}
$[\Gamma^{{A}}]_E= [M_{\overline{\alpha} }]^\emptyset_E+O_{\overline{\alpha} \rightarrow 0}(\overline{\alpha} ^{g(E)-1})$.  
\end{corollary}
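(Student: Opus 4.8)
The plan is to read the expansion off Corollary 1 and then estimate every term but one. Writing $\Gamma^{A}=M_{\overline{\alpha}}\Gamma^{\overline{A}}$ coordinatewise gives
\[
[\Gamma^{A}]_E=\sum_{H}[M_{\overline{\alpha}}]^{H}_{E}\,[\Gamma^{\overline{A}}]_H ,
\]
where, by the triangular support of $j$ inherited by $M_{\overline{\alpha}}=J_{\alpha}(-1)^eJ_{\overline{\alpha}}^{-1}$, the sum runs only over isthmus-free $H\subseteq E$. First I would single out the term $H=\emptyset$: since it imposes no constraint, $[\Gamma^{\overline{A}}]_\emptyset=f^{-v}|F^V|=1$, so it contributes exactly $[M_{\overline{\alpha}}]^{\emptyset}_{E}$, the proposed main term. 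It then remains to show that the tail $\sum_{\emptyset\neq H\subseteq E}[M_{\overline{\alpha}}]^{H}_{E}[\Gamma^{\overline{A}}]_H$ is $O_{\overline{\alpha}\to0}(\overline{\alpha}^{\,g(E)-1})$.

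The next step is to bound each factor $[\Gamma^{\overline{A}}]_H$. Using the edge-coloring form $[\Gamma^{\overline{A}}]_H=f^{c(H)-v}|\overline{A}^{H}\cap\mathrm{Im}(\delta_H)|$, I would fix a spanning forest $T\subseteq H$, which has $r(H):=v-c(H)$ edges. Restriction to $T$ identifies $\mathrm{Im}(\delta_H)$ with $F^{T}$, and any element of $\overline{A}^{H}\cap\mathrm{Im}(\delta_H)$ restricts to an element of $\overline{A}^{T}$; hence $|\overline{A}^{H}\cap\mathrm{Im}(\delta_H)|\le|\overline{A}|^{\,r(H)}=(\overline{\alpha}f)^{r(H)}$. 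Since $c(H)-v=-r(H)$, this collapses to the clean uniform estimate $[\Gamma^{\overline{A}}]_H\le\overline{\alpha}^{\,r(H)}$.

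Then I would supply the combinatorial input $r(H)\ge g(E)-1$ for every nonempty $H$ in the sum. As $H$ is isthmus-free and nonempty it contains a cycle, and since $H\subseteq E$ that cycle has length at least $g(E)$; a single such cycle already has rank $g(E)-1$, and rank is monotone under passing to subgraphs, so $r(H)\ge g(E)-1$. Combined with the previous step, $[\Gamma^{\overline{A}}]_H=O(\overline{\alpha}^{\,g(E)-1})$ for each such $H$. Finally, because $M_{\overline{\alpha}}$ is a matrix of polynomials in $\overline{\alpha}$ with coefficients independent of the group, each coefficient $[M_{\overline{\alpha}}]^{H}_{E}$ is bounded as $\overline{\alpha}\to0$; there are only finitely many $H\subseteq E$, so the tail is a finite sum of terms each $O(\overline{\alpha}^{\,g(E)-1})$, which is the claim.

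The step that most needs care is the boundedness of the coefficients $[M_{\overline{\alpha}}]^{H}_{E}$ as $\overline{\alpha}\to0$. Individually $J_{\overline{\alpha}}^{-1}$ carries negative powers of $\overline{\alpha}$, so one must genuinely invoke that $M_{\overline{\alpha}}$ is polynomial — that is, that the isthmus-free structure of $P_V$ forces these poles to cancel — rather than bounding factor by factor. I would also want to confirm that the support condition really restricts the sum to $H\subseteq E$, since that is precisely what permits the comparison of the girth of $H$ with $g(E)$. It is worth flagging that the resulting bound is not tight: the spanning-forest estimate discards the closing-edge constraints, so the true order of a single shortest cycle's contribution is $\overline{\alpha}^{\,g(E)}$ rather than $\overline{\alpha}^{\,g(E)-1}$; the weaker rank exponent is all that is claimed and all that the argument requires.
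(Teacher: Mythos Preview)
Your argument is correct and is exactly the paper's (very terse) proof with the details spelled out: expand via Corollary~1, use the spanning-forest bound $[\Gamma^{\overline{A}}]_H\le\overline{\alpha}^{\,v-c(H)}$, and observe that a nonempty isthmus-free $H\subseteq E$ has rank at least $g(E)-1$. Two small corrections to your commentary (neither affects the proof): since $J_{\overline{\alpha}}^{-1}=\overline{\alpha}^e j^{-1}(\overline{\alpha}^{-1})^e$ has entries $\overline{\alpha}^{|E|-|H|}[j^{-1}]^H_E$ with $H\subseteq E$, it is already polynomial in $\overline{\alpha}$ and there are no poles to cancel; and your claim that a shortest cycle contributes with ``true order $\overline{\alpha}^{g(E)}$'' fails in general---for $\overline{A}=\{0\}$ one has $[\Gamma^{\overline{A}}]_C=f^{1-g}=\overline{\alpha}^{\,g-1}$, so the stated exponent $g(E)-1$ is in fact sharp.
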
 
\noindent Write $[\chi(f)]_E=f^v[\Gamma^{{{F}}-\{0\}}]_E$ for the chromatic polynomial of the graph $E$ and $[f^c]_E=f^{c(E)}$.  
\begin{corollary}
$\chi(f)=M_{f^{-1}}f^c$.
\end{corollary}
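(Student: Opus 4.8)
The plan is to specialize the first Corollary, $\Gamma^{A}=M_{\overline{\alpha}}\Gamma^{\overline{A}}$, to the chromatic case $A=F-\{0\}$, in which adjacent vertices are required to carry distinct colors. First I would record the relevant substitutions: taking $A=F-\{0\}$ gives $\overline{A}=F-A=\{0\}$, so $|\overline{A}|=1$ and hence $\overline{\alpha}=|\overline{A}|/f=f^{-1}$. Consequently the operator $M_{\overline{\alpha}}$ of the first Corollary is literally $M_{f^{-1}}$, the operator appearing in the target identity, so no further manipulation of the operator is needed.

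The one computational step is to identify the vector $\Gamma^{\overline{A}}=\Gamma^{\{0\}}$. Using $[\Gamma^{\overline{A}}]_E=f^{-v}|\delta^{-1}\overline{A}^{E}|$, observe that $\overline{A}^{E}=\{0\}^{E}$ is the single all-zero edge coloring, so $\delta^{-1}\{0\}^{E}$ is exactly the kernel of the coboundary $\delta_E$, i.e. the colorings that are constant on each connected component, of which there are $f^{c(E)}$. Hence $[\Gamma^{\{0\}}]_E=f^{-v}f^{c(E)}=f^{-v}[f^c]_E$, that is $\Gamma^{\{0\}}=f^{-v}f^c$ as an element of $\C^{P_V}$. (Equivalently this drops out of the edge-coloring formula $[\Gamma^{A}]_E=f^{c(E)-v}|A^{E}\cap\hbox{Im}(\delta)|$, since the all-zero coloring always lies in $\hbox{Im}(\delta)$, making the intersection a single point.)

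Substituting these two facts into the first Corollary gives $\Gamma^{F-\{0\}}=M_{f^{-1}}(f^{-v}f^c)=f^{-v}M_{f^{-1}}f^c$, where the scalar $f^{-v}$ passes freely through the linear operator $M_{f^{-1}}$. Multiplying both sides by $f^{v}$ and invoking the definition $[\chi(f)]_E=f^{v}[\Gamma^{F-\{0\}}]_E$ then yields $\chi(f)=M_{f^{-1}}f^c$, as claimed. I do not expect a genuine obstacle: all of the real content lives in the first Corollary (and ultimately in the Theorem), and the only point demanding care is the evaluation of $\Gamma^{\{0\}}$ — specifically the identification of the fiber of $\delta$ over the zero edge coloring with the space of locally constant colorings. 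The remainder is bookkeeping of the substitution $\overline{\alpha}=f^{-1}$ and of the scalar $f^{-v}$.
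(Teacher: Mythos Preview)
Your argument is correct and is exactly the intended one: the paper merely asserts that this corollary is an immediate consequence of the Theorem (via Corollary~1), and what you have written is the natural unpacking of that assertion, specializing to $A=F-\{0\}$ so that $\overline{\alpha}=f^{-1}$ and $\Gamma^{\{0\}}=f^{-v}f^{c}$. There is nothing to add.
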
 
\section{Proofs}

Write $\langle .,.\rangle :\hat{F}\times {F}\rightarrow S^1\subseteq {\C}$ for the canonical pairing between $F$ and its (isomorphic) Pontriajin dual and extend this to $\langle .,.\rangle_E:{\hat{F}}^E\times {F}^E\rightarrow S^1\subseteq{\C}$ via $\langle P,Q\rangle_E=\prod_{t\in E}\langle P(t),Q(t)\rangle$ and similarly for $\langle .,.\rangle_V$.

The coboundary map $\delta_E$ has an $\langle .,.\rangle_E $-adjoint 
boundary map $\partial=\partial_E:{\hat{F}}^E\rightarrow {\hat{F}}^V$, so that 
if $P\in {\hat{F}}^E$, $Q\in {F}^E$ and $X\in {F}^V$ then
$\langle P,Q+\delta X\rangle _E=\langle P,Q\rangle _E\langle P,\delta X\rangle _E=\langle P,Q\rangle _E\langle\partial P,X\rangle _V$ and if ${\bf 0}=0^V$ is the $0$ vector in $\hat{F}^V$ then
$$\sum_{X\in {F}^V}\langle Y,X\rangle _V={\Bigg\{}\begin{array}{ll} f^v & \hbox{ if }Y={\bf 0}\\ 0 &\hbox{ otherwise.}\end{array}$$  

Combining these observations and using a Fourier transform gets from a sum 
over vertex colorings to a double sum over edge colorings.

\begin{lemma} $$[\Gamma^{{A}}]_E=f^{-|E|}\sum_{P\in\partial^{-1}({\bf 0})}\hskip10pt\sum_{Q\in {A}^{E}}\langle P,Q\rangle _{E}.$$
\end{lemma}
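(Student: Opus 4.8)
The plan is to start from the vertex-counting definition $[\Gamma^{A}]_E=f^{-v}|\delta^{-1}A^E|$ and use Pontryagin duality to trade the constraint that $\delta X$ lie in $A^E$ for a character sum over the edge dual $\hat F^E$. First I would write the count as a double sum
$$|\delta^{-1}A^E|=\sum_{X\in F^V}\sum_{Q\in A^E}\mathbf 1[\delta X=Q],$$
which is valid because for each $X$ the element $\delta X\in F^E$ either equals exactly one $Q\in A^E$ (when $\delta X\in A^E$) or none. The purpose of the auxiliary sum over $Q$ is to convert a membership condition into an equality $\delta X-Q=0$ in $F^E$, which the orthogonality relation can detect.

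Next I would apply the orthogonality relation over the edge group. Read on the edge side, it gives $\sum_{P\in\hat F^E}\langle P,W\rangle_E=f^{|E|}$ when $W=0$ in $F^E$ and $0$ otherwise; hence $\mathbf 1[\delta X-Q=0]=f^{-|E|}\sum_{P\in\hat F^E}\langle P,\delta X-Q\rangle_E$. Substituting and expanding the pairing multiplicatively, $\langle P,\delta X-Q\rangle_E=\langle P,\delta X\rangle_E\langle P,-Q\rangle_E$, I would then use the coboundary--boundary adjunction with $Q=0$ to rewrite $\langle P,\delta X\rangle_E=\langle\partial P,X\rangle_V$, moving the entire dependence on $X$ to the vertex side.

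With the summand factored as $\langle\partial P,X\rangle_V\langle P,-Q\rangle_E$, I would interchange the order of summation and perform the $X$-sum first. By the orthogonality relation over $V$ exactly as stated, $\sum_{X\in F^V}\langle\partial P,X\rangle_V$ equals $f^v$ when $\partial P=\mathbf 0$ and vanishes otherwise, so only $P\in\partial^{-1}(\mathbf 0)$ survive and each contributes the factor $f^v$. This collapses everything to $f^{-v}|\delta^{-1}A^E|=f^{-|E|}\sum_{P\in\partial^{-1}(\mathbf 0)}\sum_{Q\in A^E}\langle P,-Q\rangle_E$. Finally, since $A=-A$, the substitution $Q\mapsto -Q$ is a bijection of $A^E$ onto itself that replaces $\langle P,-Q\rangle_E$ by $\langle P,Q\rangle_E$, yielding the claimed formula.

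The steps are routine once the strategy is fixed; the one genuinely load-bearing choice is to detect the constraint by Fourier inversion on the \emph{edge} dual $\hat F^E$ rather than on the vertex side, since it is precisely the adjunction $\langle P,\delta X\rangle_E=\langle\partial P,X\rangle_V$ that then lets the vertex orthogonality select the boundary-free characters $\partial^{-1}(\mathbf 0)$. Getting the sign bookkeeping right—tracking the $-Q$ produced by Fourier inversion and clearing it with the symmetry $A=-A$—is the only place where a careless step would give a wrong answer.
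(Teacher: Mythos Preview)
Your argument is correct and is essentially the same as the paper's: both Fourier-expand the indicator $\mathbf 1[\delta X\in A^E]$ over the edge dual $\hat F^E$, use the adjunction $\langle P,\delta X\rangle_E=\langle\partial P,X\rangle_V$, and then sum over $X$ to isolate $\partial^{-1}(\mathbf 0)$. The only cosmetic difference is the placement of the sign: the paper writes the indicator via $\langle p,q-x\rangle$, so the minus lands on the $X$-variable (where the substitution $X\mapsto -X$ is harmless in the full sum), while you write it via $\langle P,\delta X-Q\rangle_E$, so the minus lands on $Q$ and you clear it with $A=-A$; either way works, and in fact one could also clear your $-Q$ by substituting $P\mapsto -P$ within $\partial^{-1}(\mathbf 0)$ without invoking the symmetry of $A$.
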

\begin{proof} Consider the Fourier expansion of a delta function: $$d_{{A}}(x)={\Bigg{\{}}\begin{array}{ll} 1 &\hbox{ if }x\in {A}\\0 &\hbox{ if }x\in {\overline{A}}\end{array}{\Bigg{\}}}  
=f^{-1}\sum_{p\in {\hat{F}}}\hskip3pt\sum_{q\in {A}}\langle p,q-x\rangle.$$ 

Compute:
$$[\Gamma^{{A}}]_E=f^{-v}\sum_{X\in {F}^V}\prod_{\{u,w\}\in E}d_{{A}}([X]_u-[X]_w)$$
                    $$ =f^{-v}\sum_{X\in {F}^V}\prod_{t\in E}d_{{A}}([\delta X]_t)$$
                    $$ =f^{-v}\sum_{X\in {F}^V}\prod_{t\in E}f^{-1}\sum_{p\in {\hat{F}}}\sum_{ q\in {A}}\langle p, q-[\delta X]_t\rangle$$
                    $$ =f^{-|E|-v}\sum_{X\in {F}^V}\sum_{ P\in {\hat{F}}^E}\sum_{ Q\in {A}^E}\langle P,Q-\delta X\rangle_E$$
                    $$ =f^{-|E|-v}\sum_{X\in {F}^V}\sum_{ P\in {\hat{F}}^E}\sum_{ Q\in {A}^E}\langle P,Q\rangle_E\langle P,-\delta X\rangle_E$$ 
                    $$ =f^{-|E|-v}\sum_{X\in {F}^V}\sum_{ P\in {\hat{F}}^E}\sum_{ Q\in {A}^E}\langle P,Q\rangle_E\langle \partial P,-X\rangle_V$$ 
                    $$ =f^{-|E|}\sum_{P\in \partial^{-1}({\bf 0})}\hskip10pt\sum_{ Q\in {A}^E}\langle P,Q\rangle_E.$$ 
\end{proof}
 
Write: 
$$[\Gamma^{{A}}_+]_E=f^{-|E|}\sum_{P\in\partial^{-1} ({\bf 0})\cap ({\hat{F}}-\{0\})^{E}}\hskip10pt\sum_{ Q\in {A}^{E}}\langle P,Q\rangle _{E}.$$  

\begin{lemma} $\Gamma^{{A}}=J_{{\alpha}}\Gamma^{{A}}_+$.  \end{lemma}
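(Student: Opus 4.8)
The plan is to take the Fourier formula for $[\Gamma^{A}]_E$ from the preceding lemma and organize the sum over characters $P\in\partial^{-1}(\mathbf{0})$ according to the set of edges on which $P$ is nonzero. Concretely, for $P\in\partial_E^{-1}(\mathbf{0})\subseteq\hat{F}^E$ write $H=\{t\in E:P(t)\neq 0\}$ for its support, so that $H\subseteq E$. Because $P$ vanishes off $H$, the boundary of $P$ computed on $E$ agrees with the boundary of its restriction computed on $H$, whence $P|_H\in\partial_H^{-1}(\mathbf{0})$ and $P|_H$ is nowhere zero by construction. Conversely, every nowhere-zero element of $\partial_H^{-1}(\mathbf{0})$ extends by zero to an element of $\partial_E^{-1}(\mathbf{0})$ with support exactly $H$. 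First I would record this as a bijection refining $\partial_E^{-1}(\mathbf{0})$ into the disjoint union, over $H\subseteq E$, of the nowhere-zero flows on $H$.

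Next I would factor the inner character sum. Since $\langle P,Q\rangle_E=\prod_{t\in E}\langle P(t),Q(t)\rangle$, summing over $Q\in A^E$ gives $\prod_{t\in E}\sum_{q\in A}\langle P(t),q\rangle$. For $t\notin H$ the factor is $\sum_{q\in A}\langle 0,q\rangle=|A|=\alpha f$, while the factors for $t\in H$ assemble into the corresponding sum for $P|_H$ on $H$. Collecting the $|E|-|H|$ equal factors $\alpha f$ and comparing with the definition of $[\Gamma^{A}_{+}]_H$, the powers of $f$ cancel and one is left with
$$[\Gamma^{A}]_E=\sum_{H\subseteq E,\,H\in P_V}\alpha^{|E|-|H|}\,[\Gamma^{A}_{+}]_H.$$
The final step is to recognize this as the asserted matrix identity: by the definitions of $r^e$ and $j$, the operator $J_{\alpha}=\alpha^e j(\alpha^{-1})^e$ is precisely the one whose application sends $[\Gamma^{A}_{+}]_H$ into the $E$-coordinate with weight $\alpha^{|E|-|H|}$, ranging over subgraphs $H\subseteq E$, which is exactly the claim $\Gamma^{A}=J_{\alpha}\Gamma^{A}_{+}$.

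The step I expect to be the real obstacle — and the one I would prove carefully rather than assert — is that the support $H$ of a flow always lies in $P_V$, i.e.\ is isthmus-free. If $t_0\in H$ were an isthmus of $H$, it would be the unique $H$-edge across some vertex partition $V_1\sqcup V_2$ of its component; summing the flow condition $\partial_H(P|_H)=\mathbf{0}$ over the vertices of $V_1$ telescopes over all edges internal to $V_1$ and leaves only the contribution of $t_0$, forcing $P(t_0)=0$ and contradicting $t_0\in H$. This is what guarantees that the expansion is indexed by $P_V$, and that any $H\subseteq E$ carrying an isthmus contributes nothing (its set of nowhere-zero flows, and hence $[\Gamma^{A}_{+}]_H$, is empty), so that the right-hand side is genuinely a vector in $\mathbb{C}^{P_V}$ on which $J_{\alpha}$ acts.
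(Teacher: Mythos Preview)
Your proposal is correct and follows essentially the same route as the paper: decompose the sum over $P\in\partial_E^{-1}(\mathbf{0})$ from the preceding lemma according to the support of $P$, pull out the factor $(\alpha f)^{|E|-|H|}$ from the edges outside the support, and recognize the result as $J_\alpha\Gamma^{A}_{+}$. The paper's proof is simply a terser version of yours; in particular it silently uses that the support of any element of $\partial_E^{-1}(\mathbf{0})$ is isthmus-free (so that the sum is genuinely over $P_V$), which you have taken care to prove explicitly.
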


\begin{proof} Write $E'$ for the support of $P$ and restrict $Q$ to $E'$ thereby losing a factor of $({\alpha}f)^{|E-E'|}$.  
$$f^{|E|}[\Gamma^{{A}}]_E=\sum_{P\in \partial_E^{-1}({\bf 0})}\hskip10pt\sum_{ Q\in {A}^E}\langle P,Q\rangle_E$$ 
       $$=f^{|E|}{\alpha}^{|E|}\sum_{E'\leq E} {\alpha}^{-|E'|}f^{-|E'|} \sum_{ P'\in \partial_{E'}^{-1}({\bf 0})\cap ({\hat{F}}-\{0\})^{E'}}\hskip10pt\sum_{ Q'\in {A}^{E'}}\langle P',Q'\rangle_{E'}$$ 
       $$ =f^{|E|}[{\alpha}^ej({\alpha}^{-1})^e\Gamma_+^{{A}}]_E.$$ 
\end{proof}

\begin{lemma} $\Gamma^{{A}}_+=(-1)^e\Gamma^{\overline{A}}_+$. \end{lemma}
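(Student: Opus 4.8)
The plan is to prove the identity coordinatewise, establishing that for each $E$ one has $[\Gamma^{{A}}_+]_E=(-1)^{|E|}[\Gamma^{\overline{A}}_+]_E$, which is precisely what the diagonal operator $(-1)^e$ encodes (its entries are $[(-1)^e]^E_E=(-1)^{|E|}$, by analogy with $r^e$). Since both $\Gamma^{A}_+$ and $\Gamma^{\overline{A}}_+$ sum over the \emph{same} index set $P\in\partial^{-1}({\bf 0})\cap({\hat{F}}-\{0\})^{E}$ and carry the same prefactor $f^{-|E|}$, it will suffice to compare the inner sums $\sum_{Q\in{A}^E}\langle P,Q\rangle_E$ and $\sum_{Q\in\overline{A}^E}\langle P,Q\rangle_E$ term by term in $P$.

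The first step is to exploit the product structure of the pairing. Because $\langle P,Q\rangle_E=\prod_{t\in E}\langle P(t),Q(t)\rangle$, the inner sum factors over edges as $\sum_{Q\in{A}^E}\langle P,Q\rangle_E=\prod_{t\in E}\bigl(\sum_{q\in{A}}\langle P(t),q\rangle\bigr)$, and similarly with $\overline{A}$ in place of $A$. Everything therefore reduces to understanding a single-edge character sum $\sum_{q\in{A}}\langle p,q\rangle$ against $\sum_{q\in\overline{A}}\langle p,q\rangle$ for a fixed character $p=P(t)$.

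The key step is character orthogonality: summing over the whole group gives $\sum_{q\in{F}}\langle p,q\rangle=0$ whenever $p\neq 0$. This is exactly where the subscript $+$ is used: the restriction $P\in({\hat{F}}-\{0\})^{E}$ forces $P(t)\neq 0$ for every $t\in E$, so that $\sum_{q\in{A}}\langle P(t),q\rangle+\sum_{q\in\overline{A}}\langle P(t),q\rangle=0$, i.e.\ on each edge the $\overline{A}$ sum is the negative of the $A$ sum. Taking the product over the $|E|$ edges then introduces exactly one sign per edge.

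Carrying this out gives $\sum_{Q\in\overline{A}^E}\langle P,Q\rangle_E=\prod_{t\in E}\bigl(-\sum_{q\in{A}}\langle P(t),q\rangle\bigr)=(-1)^{|E|}\sum_{Q\in{A}^E}\langle P,Q\rangle_E$ for each admissible $P$; summing over $P$ and multiplying by $f^{-|E|}$ yields $[\Gamma^{\overline{A}}_+]_E=(-1)^{|E|}[\Gamma^{{A}}_+]_E$, which rearranges to the claim since $(-1)^{|E|}$ is an involution. I do not expect a genuine obstacle here; the only subtlety to watch is that orthogonality must produce $0$ rather than $f$, and this is guaranteed precisely by the nonvanishing of every $P(t)$ built into the $+$ construction — had $P(t)=0$ been allowed on some edge, that edge would contribute $\sum_{q\in{F}}\langle 0,q\rangle=f$ and the clean sign-flip would break down.
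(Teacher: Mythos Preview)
Your proof is correct and follows essentially the same route as the paper's: use character orthogonality $\sum_{q\in F}\langle p,q\rangle=0$ for $p\neq 0$ to get the single-edge sign flip $\sum_{q\in A}\langle p,q\rangle=-\sum_{q\in\overline{A}}\langle p,q\rangle$, then apply this over all $|E|$ edges (you phrase this via the product factorization of $\langle P,Q\rangle_E$, the paper just says ``applying this observation $e$ times'') and sum over the nowhere-vanishing $P$'s with trivial boundary.
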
  

\begin{proof} If $p\in {\hat{F}}-\{0\}$ then $\sum_{q\in {F}}\langle p,q\rangle=0$  
so $\sum_{q\in {A}}\langle p,q\rangle=-\sum_{q\in {\overline{A}}}\langle p,q\rangle$.  
Applying this observation $e$ times gives 
$$\sum_{Q\in {A}^E}\langle P,Q\rangle_E=(-1)^e\sum_{Q\in {\overline{A}^E}}\langle P,Q\rangle_E$$ for any nowhere vanishing $P\in ({\hat{F}}-\{0\})^E$.  
The desired equation is the sum of this one over $P$'s with trivial boundary.  
\end{proof}

The theorem now follows immediately and corollaries one and three are immediate consequences.  
For corollary two note that $[\Gamma^{\overline{A}}]_E\leq\overline{\alpha} ^{v-c(E)}$ with equality if $E$ is a forest.  Thus $[\Gamma^{\overline{A}}]_\emptyset=1$ gives the largest term if $\overline{\alpha} $ is small with the first corrections associated to the shortest cycles in $E$.

\section{Examples}

\noindent{\bf Example 1.} ($M$ for $v=3$ and $v=4$)

It is straightforward to compute $M_{\overline{\alpha} }$ as a matrix of polynomials in $\overline{\alpha} $.
If $v=3$ then $P_v$ has two elements (corresponding to the complete and empty graphs with three vertices)  
and{ $$M_{\overline{\alpha} }=
\Big{[}\begin{array}{cc} 1 & (1-\overline{\alpha} )^3\\ 0 & 1\end{array}\Big{]}  
\Big{[}\begin{array}{cc} -1 & 0\\ 0 & 1\end{array}\Big{]} 
\Big{[}\begin{array}{cc} 1 & -\overline{\alpha} ^3\\ 0 & 1\end{array}\Big{]}  
=\Big{[}\begin{array}{cc} -1 & 1-3\overline{\alpha} +3\overline{\alpha} ^2\\ 0 & 1\end{array}\Big{]}.$$}  

If $v=4$ then $P_V$ has fifteen elements, falling into the five graph isomorphism classes: complete, complement of an edge, four cycle, three cycle and empty.  
Use the notation ${{\bf 1}}^a_b$ for the $a$ by $b$ matrix of $1$s, $K^3_6=\left[\begin{array}{c}I_3\\I_3\end{array}\right]$ and $L^4_6=\left[{\tiny\begin{array}{cccc} 1 & 1 & 0 & 0\\ 1 & 0 & 1 & 0 \\ 1 & 0 & 0 & 1 \\ 0 & 0 & 1 & 1 \\ 0 & 1 & 0 & 1 \\ 0 & 1 & 1 & 0\end{array}}\right]$ 
to express $M_{\overline{\alpha}}=J_{{\alpha}}(-1)^eJ_{\overline{\alpha} }^{-1}$ in block form: 
$$\left[\begin{array}{ccccc}
1 & {\alpha}{{\bf 1}}^6_1 & {\alpha}^2{{\bf 1}}^3_1 & {\alpha}^3{{\bf 1}}^4_1 & {\alpha}^6 \\
0 & I_6 & {\alpha}K^3_6 & {\alpha}^2L^4_6 & {\alpha}^5{{\bf 1}}^1_6 \\
0 & 0    & I_3  & 0 & {\alpha}^3{{\bf 1}}^1_3 \\
0 & 0    & 0    & \pm I_4 & c_9{{\bf 1}}^1_4 \\
0 & 0    & 0    & 0    & 1
\end{array}\right]\left[\begin{array}{ccccc}
1 & 0 & 0 & 0 & 0 \\
0 & -I_6 & 0 & 0 & 0 \\
0 & 0    & I_3  & 0 & 0 \\
0 & 0    & 0    & -I_4 & 0 \\
0 & 0    & 0    & 0    & 1
\end{array}\right]\left[\begin{array}{ccccc}
1 & -\overline{\alpha} {{\bf 1}}^6_1 & \overline{\alpha} ^2{{\bf 1}}^3_1 & 2\overline{\alpha} ^3{{\bf 1}}^4_1 & -6\overline{\alpha} ^6 \\
0 & I_6 & -\overline{\alpha} K^3_6 & -\overline{\alpha} ^2L^4_6 & 2\overline{\alpha} ^5{{\bf 1}}^1_6 \\
0 & 0    & I_3  & 0 & -\overline{\alpha} ^4{{\bf 1}}^1_3 \\
0 & 0    & 0    & I_4 & -\overline{\alpha} ^3{{\bf 1}}^1_4 \\
0 & 0    & 0    & 0    & 1
\end{array}\right]$$
$$=\left[\begin{array}{ccccc}
1 & -{{\bf 1}}^6_1 & {{\bf 1}}^3_1 & (-1+3\overline{\alpha}  -\overline{\alpha} ^2+\overline{\alpha} ^3){{\bf 1}}^4_1 & (1-6\overline{\alpha} +15\overline{\alpha} ^2-16\overline{\alpha} ^3) \\
0 & -I_6 & K^3_6 & (-1+2\overline{\alpha} )L^4_6 & (1-5\overline{\alpha} +10\overline{\alpha} ^2-3\overline{\alpha} ^3){{\bf 1}}^1_6 \\
0 & 0    & I_3  & 0 & (1-4\overline{\alpha} +6\overline{\alpha} ^2-4\overline{\alpha} ^3){{\bf 1}}^1_3 \\
0 & 0    & 0    & -I_4 & (1-3\overline{\alpha} +3\overline{\alpha} ^2){{\bf 1}}^1_4 \\
0 & 0    & 0    & 0    & 1
\end{array}\right].$$
\vskip10pt
Next is a brief discussion of two infinite families of color groups.  
The first is a single cycle and the second a product of short (length two) cycles.  In both cases the allowed sets are all elements far enough from the 
identity, with the latter using the Hamming metric.  
Thus $[\Gamma^{{A}}]_E$ is the probability that a coloring has adjacent 
vertices colored with distant rather than distinct colors.  
\vskip15pt
\noindent{\bf Example 2.} (Cyclic group)

Consider ${F}={\Z}\slash f{\Z}$ and ${A}=(k, f-k)$ so $k={\overline{\alpha} f-1\over 2}$.  If $E$ is fixed then $[\Gamma^{{A}}]_E$ is a piecewise polynomial in $\overline{\alpha} $ and $f^{-1}$ of degree at most $e$ in the first and $v$ in the second. 

If $v=3$ then $[\Gamma^{{A}}]_\emptyset=[\Gamma^{\overline{A}}]_\emptyset=1$ and $$[\Gamma^{\overline{A}}]_{K_3}=\Bigg{\{}\begin{array}{cl}
1-3\overline{\alpha} +3\overline{\alpha} ^2 & \hbox{ if }\overline{\alpha} >{2\over 3} \\ {3\over 4}\overline{\alpha} ^2 + {1\over 4}f^{-2} & \hbox{ otherwise}\end{array}\Bigg{\}}$$
so $$[\Gamma^{{A}}]_{K_3}=\Bigg{\{}\begin{array}{cl}
0 & \hbox{ if }\overline{\alpha} >{2\over 3} \\ 1-3\overline{\alpha} +{9\over 4}\overline{\alpha} ^2 - {1\over 4}f^{-2} & \hbox{ otherwise}\end{array}\Bigg{\}}.$$
\vskip15pt
\noindent{\bf Example 3.}  (Hamming)

Consider ${F}=({\Z}\slash 2{\Z})^n$ and ${A}=\{x\in {F}||\{i|[x]_i=1\}| >k\}$ so $f=2^n$ and if $k={n\over 2}+{r\over 2}\sqrt{n}$ then
$\lim_{n\rightarrow\infty}\overline{\alpha} ={1\over\sqrt{2\pi}}\int_{t=-\infty}^{-r}e^{-t^2\over 2}dt$.  
The central limit theorem also gives that the edge conditions become independent in the limit so that $\lim_{n\rightarrow\infty}\Gamma^{{A}}=\overline{\alpha}^{-e}$.  
If $k=1$ then $\overline{\alpha}=(n+1)2^{-n}$ and $[\Gamma^{\overline{A}}]_{K_3}=(3n+1)4^{-n}$ so $[\Gamma^{{A}}]_{K_3}=1-(3n+3)2^{-n}+(3n^2+3n)4^{-n}$.   
  
\bibliographystyle{amsplain}
%\bibliography{bib}

\begin{thebibliography}{10}

\bibitem{Bollobas}
B\'{e}la Bollob\'{a}s, \emph{Modern Graph Theory}, Graduate Texts in Mathematics Vol 184, ISBN 0-387-98491-7 (1991).

\bibitem{diestel}
Reinhard Diestel, \emph{Graph Theory}, Graduate Texts in Mathematics Vol 173, ISBN 978-3-642-14278-9 (2010).

\end{thebibliography}

\def\cprime{$'$} \def\cprime{$'$}
\providecommand{\bysame}{\leavevmode\hbox to3em{\hrulefill}\thinspace}
\providecommand{\MR}{\relax\ifhmode\unskip\space\fi MR }
% \MRhref is called by the amsart/book/proc definition of \MR.
\providecommand{\MRhref}[2]{%
  \href{http://www.ams.org/mathscinet-getitem?mr=#1}{#2}
}
\providecommand{\href}[2]{#2}

\vskip20pt
\noindent{\bf Authors:} Eric Babson and Matthias Beck.  
\end{document}